\newcommand{\R} {\ensuremath{\mathbb{R}}}
\newcommand{\C} {\ensuremath{\mathbb{C}}}
\newcommand{\OO}{\mathcal{O}}
\renewcommand{\o}[1]{\overline{#1}}
\newcommand{\dq}{\overline{\partial}}
\DeclareMathOperator{\Sing}{Sing}
\DeclareMathOperator{\Jac}{Jac}
\newtheorem {satz} {Satz} [section]
\newtheorem {lem} [satz] {Lemma}
\newtheorem {cor} [satz] {Corollary}
\newtheorem {defn} [satz] {Definition}
\newtheorem {thm} [satz] {Theorem}
\DeclareMathOperator{\supp}{supp}
\renewcommand{\theta}{\vartheta}
\title[Parabolicity of the regular locus of complex varieties] 
{Parabolicity of the regular locus of complex varieties}
\author{J. Ruppenthal}
\address{Department of Mathematics, University of Wuppertal, Gau{\ss}str. 20, 42119 Wuppertal, Germany.}
\email{ruppenthal@uni-wuppertal.de}
\date{\today}
\subjclass[2000]{31C12, 53C20, 32C18, 32C25, 32W05}
\keywords{Parabolic Riemannian manifold, singular complex spaces, subharmonic functions, $L^2$-theory, $\dq$-operator}
\begin{document}

~\\[-7mm]

\begin{abstract} 
The purpose of this note is to show that the regular locus of a complex variety is locally parabolic at the singular set.
This yields that the regular locus of a compact complex variety, e.g., of a projective variety, is parabolic.
We give also an application to the $L^2$-theory for the $\dq$-operator on singular spaces.
\end{abstract}

\maketitle

\section{Introduction}

There are many equivalent ways to define parabolicity of a Riemannian manifold.
Let us recall some of them:

\begin{defn}\label{defn:parabolic}
A Riemannian manifold $M$ is called {\bf parabolic} if the following equivalent conditions hold:

\smallskip
\begin{enumerate}
\item There exists a smooth exhaustion function $\psi\in C^\infty(M)$ with $\|d\psi\|_{L^2(M)}<\infty$.

\medskip
\item For each compact subset $K\subset M$ and each $\epsilon>0$, there exists a smooth cut-off function $\phi\in C^\infty_{cpt}(M)$
with $0\leq \phi\leq 1$, $\phi\equiv 1$ on a neighborhood of $K$ and $\|d\phi\|_{L^2(M)}<\epsilon$.

\medskip
\item Every subharmonic function on $M$ that is bounded from above is constant.

\medskip
\item There is no positive fundamental solution of the Laplacian on $M$, i.e.,
there is no positive Green function for the Laplacian on $M$.

\end{enumerate}
\end{defn}

\medskip
Condition (2) means that compact subsets of $M$ have zero capacity.
The equivalence of the conditions (1) -- (4) in Definition \ref{defn:parabolic} is standard knowledge
and can be derived directly e.g. from the characterization in \cite{GK} and \cite{G}.
We refer to \cite{Gri2}, \cite{Gri3} and \cite{GM} for the discussion of other sufficient and necessary conditions
(e.g. in terms of isoperimetric inequalities, Brownian motion, stochastic completeness, etc.)
and historical remarks.

Note that complete Riemannian manifolds are not necessarily parabolic.
By the Hopf-Rinow theorem, a Riemannian manifold is complete if and only if it carries
an exhaustion function with bounded gradient.
So, complete manifolds of finite volume are parabolic by condition (1) above.
More generally, Cheng and Yau \cite{CY} showed that complete Riemannian manifolds
are parabolic if the volume of geodesic balls grows at most like a quadratic polynomial.
A stronger sufficient condition is due to Grigor'yan \cite{Gri1}:
A complete manifold M is parabolic if
$\int_0^\infty \mbox{vol}(B_r(x_0))^{-1} r dr =\infty$,
where $B_r$ denotes the geodesic ball of radius $r$
around a fixed point $x_0\in M$.

For non-complete Riemannian manifolds, parabolicity can be characterized in terms of the "size"
of the "boundary" of the manifold.
Glasner \cite{G} showed that
non-compact Riemannian manifolds are parabolic if and only if they have a "small boundary" in the sense that Stokes' theorem holds:

\begin{thm}[{\bf Glasner \cite{G}}]
A non-compact Riemannian manifold $M$ of real dimension $n$ is parabolic if and only if Stokes' theorem is valid
for every square integrable $(n-1)$-form with integrable derivative, i.e., if
\begin{eqnarray*}
\int_M d\alpha &=& 0
\end{eqnarray*}
for every $\alpha \in L^2_{n-1}(M)$ with $d\alpha \in L^1_{n}(M)$.
\end{thm}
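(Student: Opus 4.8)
The plan is to use condition~(2) of Definition~\ref{defn:parabolic} --- the vanishing capacity of compact subsets --- as the bridge between parabolicity and the validity of Stokes' theorem. With this in hand the forward implication is a short cut-off argument, while the converse is the substantial part and will be handled by potential theory via the contrapositive.

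For the forward direction, assume $M$ is parabolic and let $\alpha\in L^2_{n-1}(M)$ with $d\alpha\in L^1_n(M)$. First I would fix an exhaustion $K_1\subset K_2\subset\cdots$ of $M$ by compact sets and, using condition~(2), choose cut-off functions $\phi_j\in C^\infty_{cpt}(M)$ with $0\le\phi_j\le 1$, $\phi_j\equiv 1$ on a neighborhood of $K_j$, and $\|d\phi_j\|_{L^2(M)}<1/j$. Since $\phi_j\alpha$ has compact support, the classical Stokes theorem on the boundaryless manifold gives $\int_M d(\phi_j\alpha)=0$, hence $\int_M\phi_j\,d\alpha=-\int_M d\phi_j\wedge\alpha$. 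The right-hand side is controlled by Cauchy--Schwarz, $|\int_M d\phi_j\wedge\alpha|\le\|d\phi_j\|_{L^2}\|\alpha\|_{L^2}\le\|\alpha\|_{L^2}/j\to 0$, while on the left $\phi_j\to 1$ pointwise with $0\le\phi_j\le 1$ and $d\alpha\in L^1$, so dominated convergence yields $\int_M\phi_j\,d\alpha\to\int_M d\alpha$. Letting $j\to\infty$ gives $\int_M d\alpha=0$.

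For the converse I would argue by contraposition: if $M$ is not parabolic, then condition~(2) fails, so there is a relatively compact domain $\Omega\Subset M$ with smooth boundary whose capacity is positive (take $\Omega$ a smooth neighborhood of a compact set of positive capacity and use monotonicity). The key object is the equilibrium (capacitory) potential $u$, the minimizer of the Dirichlet energy among $W^{1,2}$ functions that are $\ge 1$ on $\o{\Omega}$. Standard potential theory gives $0\le u\le 1$, $u\equiv 1$ on $\o{\Omega}$, $u$ harmonic on $M\setminus\o{\Omega}$, and $\int_M|du|^2=\operatorname{cap}(\Omega)>0$; non-parabolicity is exactly what guarantees this minimizer exists with finite energy and does not reduce to a constant. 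I would then take $\alpha=*du$, the Hodge dual $(n-1)$-form. Because $|\alpha|=|du|$ pointwise we have $\alpha\in L^2_{n-1}(M)$, and $d\alpha=\pm(\Delta u)\,dV$ is concentrated on $\partial\Omega$ (a measure a priori, since $u$ is only Lipschitz across $\partial\Omega$). Integrating and using Green's identity, the total mass of $d\alpha$ equals the flux $\oint_{\partial\Omega}\partial_\nu u\,dS=-\operatorname{cap}(\Omega)\ne0$, so Stokes' theorem must fail for $\alpha$.

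The main obstacle is this converse, and within it the delicate point is to turn the distributional obstruction into an honest counterexample satisfying the hypotheses literally, i.e. to produce $\alpha\in L^2_{n-1}$ with $d\alpha\in L^1_n$ (not merely a measure) and $\int_M d\alpha\ne0$. I would resolve this by smoothing $u$ in a thin neighborhood of $\partial\Omega$ to a function $u_\delta$ that is smooth on $M$, equals $1$ near $\o{\Omega}$, agrees with $u$ outside the neighborhood, and still has finite energy; then $d(*du_\delta)=\pm(\Delta u_\delta)\,dV$ is smooth with compact support, hence lies in $L^1_n$, while $*du_\delta\in L^2_{n-1}$. Since $u_\delta=u$ away from $\partial\Omega$ and $u$ is harmonic there, the divergence theorem on a large geodesic ball $B_R\supset\o{\Omega}$ shows $\int_M\Delta u_\delta\,dV=\oint_{\partial B_R}\partial_n u\,dS=-\operatorname{cap}(\Omega)$ independently of $\delta$ and of $R$, which is nonzero. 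This produces the required form and completes the contrapositive, establishing the equivalence.
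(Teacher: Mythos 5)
The paper does not prove this statement at all --- it is quoted from Glasner \cite{G} as background, and only the easy direction is ever used (implicitly, via Theorem \ref{thm:parabolic}(2) in the proof of Theorem \ref{thm:bounded}) --- so there is no internal proof to compare against; I can only assess your argument on its own terms, and it is essentially the standard proof and sound. The forward direction is complete: $\int_M d(\phi_j\alpha)=0$ for the compactly supported truncations (pair the distributional derivative with a test function equal to $1$ on $\supp\phi_j$), and the Cauchy--Schwarz plus dominated convergence step is exactly right. For the converse you correctly identify both the right object (the equilibrium potential $u$ of a smooth relatively compact domain of positive capacity) and the genuine technical obstacle, namely that $d({*du})$ is a priori only a measure concentrated on $\partial\Omega$; your compactly supported smoothing $u_\delta$ disposes of it cleanly. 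Two points deserve more care than you give them. First, the flux identity $\oint_{\partial\Omega}\partial_\nu u\,dS=-\operatorname{cap}(\Omega)$ should not be read off from Green's identity on the unbounded exterior $M\setminus\overline{\Omega}$, where the boundary term at infinity is a priori uncontrolled; instead apply Green's identity on the compact condensers $\Omega_j\setminus\overline{\Omega}$ to the approximating potentials $u_j$ (which vanish on $\partial\Omega_j$, killing the outer boundary term), obtaining $\int|du_j|^2=-\oint_{\partial\Omega}\partial_\nu u_j\,dS$, and pass to the limit using elliptic estimates up to the smooth boundary $\partial\Omega$. Second, the smoothing really must be a compactly supported interpolation, e.g. $u_\delta=\rho u+(1-\rho)$ with $\rho$ vanishing near $\overline{\Omega}$ and equal to $1$ outside a tubular neighborhood of $\partial\Omega$; a construction based on level sets of $u$ would be dangerous, since sets of the form $\{u\geq 1-\delta\}$ need not be compact (on a parabolic end $u$ tends to $1$). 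With these two routine repairs the proof is correct.
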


A similar characterization in terms of Green's formula in place of Stokes' theorem
is given by Grigor'yan and Masamune in \cite{GM}, Theorem 1.1.

\medskip
Let us now consider a Hermitian complex space $(X,h)$.
A Hermitian complex space $(X,h)$ is a paracompact reduced complex space $X$ with a metric $h$ on the regular locus
such that the following holds: If $x\in X$ is an arbitrary point, then there exists a neighborhood $U=U(x)$ and a
biholomorphic embedding of $U$ into a domain $G$ in $\C^N$ and an ordinary smooth Hermitian metric in $G$
whose restriction to $U$ is $h|_U$.
Examples are projective varieties with the restriction of the Fubini-Study metric or affine varieties
with the restriction of the Euclidean metric.
The Hermitian metric gives also a Riemannian structure on the regular locus (which is clearly not complete).
For a subset $\Omega\subset X$ and a differential form $\alpha$ on the regular part of $\Omega$,
we set
\begin{eqnarray*}
\|\alpha\|^2_{L^2(\Omega)} &=& \int_{\Omega\setminus \Sing X} |\alpha|_h^2\ dV_X,
\end{eqnarray*}
where $dV_X$ is the volume form on the regular part of $X$ with respect to the metric $h$.
We say that $\alpha\in L^{2,loc}(\Omega)$ if $\|\alpha\|_{L^2(K)} <\infty$ on each compact set $K\subset\Omega$.

\smallskip
As the singular set of a complex variety $X$ is of real codimension two,
thus "small" in a certain sense, it is reasonable to expect that the regular locus of $X$ is locally parabolic
at singular points. 
This idea is substantiated by Stokes' theorem for analytic varieties (see \cite{GH}, Chapter 0.2):

\begin{thm}\label{thm:GH}
Let $M$ be a complex manifold, $V\subset M$ an analytic subvariety of dimension $k$
and $\varphi$ a smooth differential form of degree $2k-1$ with compact support in $M$.
Then
\begin{eqnarray*}
\int_V d\varphi &=& 0.
\end{eqnarray*}
\end{thm}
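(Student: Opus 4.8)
The plan is to reduce to ordinary Stokes' theorem on the smooth (but non-complete) oriented manifold $\Reg V = V\setminus\Sing V$ and to show that the singular set contributes nothing in the limit. We may assume $V$ is of pure dimension $k$ (otherwise restrict to the union of its $k$-dimensional irreducible components, over which the $2k$-form $d\varphi$ is integrated); then $\Sing V$ is an analytic subset of complex dimension at most $k-1$, hence of real codimension at least two in $V$. In particular $\Sing V$ has vanishing $2k$-dimensional Hausdorff measure, so $\int_V d\varphi = \int_{\Reg V} d\varphi$, and this integral is finite because $\varphi$ is smooth with compact support while $\Reg V\cap\supp\varphi$ has finite volume by Lelong's local finiteness of the volume of analytic sets.

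The main device is a family of cut-off functions peeling away a shrinking neighbourhood of $\Sing V$. Writing $r=\dist(\cdot,\Sing V)$ (locally in an embedding, then globally via a partition of unity), I would choose $\chi_\epsilon\in C^\infty(\Reg V)$ with $0\le\chi_\epsilon\le 1$, $\chi_\epsilon\equiv 0$ on $\{r\le\epsilon\}$, $\chi_\epsilon\equiv 1$ on $\{r\ge 2\epsilon\}$ and $\|d\chi_\epsilon\|\lesssim\epsilon^{-1}$. Then $\chi_\epsilon\varphi$ is a smooth $(2k-1)$-form with compact support in the manifold $\Reg V$, so the classical Stokes theorem gives $\int_{\Reg V} d(\chi_\epsilon\varphi)=0$. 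Expanding $d(\chi_\epsilon\varphi)=d\chi_\epsilon\wedge\varphi+\chi_\epsilon\,d\varphi$ yields
\begin{eqnarray*}
\int_{\Reg V}\chi_\epsilon\,d\varphi &=& -\int_{\Reg V} d\chi_\epsilon\wedge\varphi .
\end{eqnarray*}
As $\epsilon\to 0$ the left-hand side tends to $\int_{\Reg V} d\varphi=\int_V d\varphi$ by dominated convergence (the integrand is dominated by $|d\varphi|$, integrable on $\Reg V\cap\supp\varphi$). It thus remains to show that the right-hand side tends to $0$.

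Since $d\chi_\epsilon$ is supported in the tube $T_\epsilon=\{\epsilon\le r\le 2\epsilon\}$ and $\varphi$ is bounded, we have $\bigl|\int_{\Reg V} d\chi_\epsilon\wedge\varphi\bigr|\lesssim \epsilon^{-1}\,\mathrm{vol}\bigl(\Reg V\cap\supp\varphi\cap T_\epsilon\bigr)$, and the crux is the volume estimate $\mathrm{vol}(\Reg V\cap\supp\varphi\cap\{r\le\delta\})\lesssim\delta^2$. I would obtain this by covering the compact set $\Sing V\cap\supp\varphi$, of real dimension at most $2k-2$, by $O(\delta^{-(2k-2)})$ balls of radius $\delta$ with bounded overlap, and bounding the volume of $V$ in each ball by $O(\delta^{2k})$ through the uniform local bound on the $2k$-volume of an analytic $k$-dimensional set. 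Combining the two factors gives $\epsilon^{-1}\cdot\epsilon^2=\epsilon\to 0$, which finishes the argument. The main obstacle is precisely this uniform volume bound near $\Sing V$: it is here that real codimension two enters, and it also explains why a crude linear cut-off already suffices, since we only need control of $d\chi_\epsilon$ in $L^1$ rather than in $L^2$ (the latter would force a logarithmic cut-off). Alternatively, one could pull back by a resolution of singularities $\pi\colon\wt V\to V$ and apply Stokes to the compactly supported form $\pi^*\varphi$ on the manifold $\wt V$; the direct cut-off argument, however, avoids invoking resolution and makes the role of the codimension transparent.
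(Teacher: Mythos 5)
The paper offers no proof of this statement---it is quoted from \cite{GH}, Chapter 0.2---but your argument is exactly the standard one given there and sketched in the paper's introduction: excise an $\epsilon$-tube around $\Sing V$ using a cut-off whose differential is $O(\epsilon^{-1})$ in sup-norm, and beat it with the $O(\epsilon^{2})$ volume of the tube coming from real codimension two. The only ingredients you invoke beyond Stokes' theorem on $\Reg V$, namely the locally uniform $O(\delta^{2k})$ bound for the volume of $V$ in $\delta$-balls and the $O(\delta^{-(2k-2)})$ covering number of the compact set $\Sing V\cap\supp\varphi$, are standard properties of analytic sets (local representation as finite branched covers over polydiscs), so the proof is correct and complete.
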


Here, $\varphi$ and $d\varphi$ are bounded.
So, Theorem \ref{thm:GH} is not strong enough to imply parabolicity directly by Grasner's approach.
However, the principle behind the proof of Theorem \ref{thm:GH}
is to cut out the singular set and to show that the derivatives of a sequence of cut-off functions is uniformly bounded
in a sense that allows for Stokes' theorem to hold.
Refining such a cut-off procedure,
we were able to show that the regular locus of a Hermitian complex space is actually locally parabolic
(take $A=\Omega\cap\Sing X$ in the following):

\begin{thm}\label{thm:parabolic}
Let $X$ be a Hermitian complex space, $\Omega \subset X$ open and $A\subset \Omega$ a thin analytic subset that
contains the singular locus, i.e., $\Omega\cap \Sing X \subset A$.

\smallskip
\begin{enumerate}
\item Then there exists an exhaustion function $\phi\in C^\infty(\Omega\setminus A)$ of $\Omega\setminus A$
such that $d\phi \in L^{2,loc}(\Omega)$, i.e., $\|d\phi\|_{L^2(K)} < \infty$ for any compact set $K\subset\Omega$.

\medskip
\item Let $\Omega$ be relatively compact in $X$,
and let $K\subset \Omega\setminus A$ be a compact subset. Then there exists for each $\epsilon>0$
a smooth cut-off function $\psi\in C^\infty(\Omega)$ with $0\leq \psi\leq 1$ such that $\supp \psi \cap A=\emptyset$,
$\psi\equiv 1$ on a neighborhood of $K$ and $\|d\psi\|_{L^2(\Omega)} < \epsilon$.

\end{enumerate}
\end{thm}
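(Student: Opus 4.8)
The plan is to reduce both assertions to one local estimate on the growth of the logarithm of a defining function of $A$, and then to assemble the global objects with a partition of unity. Since $A$ is thin analytic and $\Sing X\subset A$, the set $\Omega\setminus A$ lies in the complex manifold $X_{\mathrm{reg}}$. I would cover a neighbourhood of $A$ (for part~(2), of the compact set $A\cap\o{\Omega}$) by finitely many charts $U_\alpha$, each biholomorphically embedded in a domain $G_\alpha\subset\C^{N_\alpha}$ on which $h$ is the restriction of a smooth Hermitian metric. Thinness lets me choose in each chart holomorphic functions $f^{(\alpha)}_1,\dots,f^{(\alpha)}_{m_\alpha}$ with $A\cap U_\alpha=\{f^{(\alpha)}_1=\dots=f^{(\alpha)}_{m_\alpha}=0\}$, and I set $\rho_\alpha:=\big(\sum_j|f^{(\alpha)}_j|^2\big)^{1/2}$, so that $\{\rho_\alpha=0\}=A\cap U_\alpha$ exactly and $u_\alpha:=\log\rho_\alpha$ is plurisubharmonic on $U_\alpha$, smooth on $U_\alpha\cap X_{\mathrm{reg}}\setminus A$, and tends to $-\infty$ precisely along $A$. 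The cut-offs of part~(2) will be built from $\chi(u_\alpha/\log\delta)$ for a fixed smooth profile $\chi$ with $\chi\equiv1$ on $(-\infty,1]$ and $\chi\equiv0$ on $[2,\infty)$, and the exhaustion of part~(1) from $\log(-u_\alpha)=\log\log(1/\rho_\alpha)$, which tends to $+\infty$ along $A$ arbitrarily slowly.

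The heart of the matter is a uniform flux estimate, which I would carry out for the flat metric induced by the embedding; since this metric is uniformly equivalent to $h$ on the compact chart, the resulting $L^2$ bounds transfer up to a constant. Writing $u=u_\alpha$ and $\Phi(s):=\int_{\{u=s\}\cap X_{\mathrm{reg}}}|\nabla u|\,d\mathcal{H}^{2n-1}$, I claim that $\Phi(s)\le M$ uniformly for $s\le s_0$, with $s_0$ so negative that $\{\rho_\alpha<e^{s_0}\}$ is relatively compact in $U_\alpha$. Indeed, for $s_1<s_2\le s_0$ the shell $\{s_1<u<s_2\}$ is relatively compact in $X_{\mathrm{reg}}$ (its closure avoids $A\supset\Sing X$, where $u=-\infty$), so the divergence theorem applies with no boundary contribution from the singular set and yields
\begin{equation*}
\Phi(s_2)-\Phi(s_1)=\int_{\{s_1<u<s_2\}}\Delta u\,dV=\mu\big(\{s_1<u<s_2\}\big)\ge 0,
\end{equation*}
where $\mu:=\Delta u\,dV\ge0$ is the Riesz measure of the subharmonic function $u$. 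Hence $\Phi$ is nondecreasing and bounded above by the total mass $\mu(\{u<s_0\})$, which is finite because the Riesz mass of $\log\rho_\alpha$ near $A$ is controlled by the finite volume of the associated analytic cycle. Granting $\Phi\le M$, the coarea formula gives the two estimates I need,
\begin{equation*}
\int_{\{\delta^2<\rho_\alpha<\delta\}}|\nabla u|^2\,dV=\int_{2\log\delta}^{\log\delta}\Phi(s)\,ds\le M|\log\delta|,\qquad\int_{\{\rho_\alpha<\delta\}}\frac{|\nabla u|^2}{u^2}\,dV=\int_{-\infty}^{\log\delta}\frac{\Phi(s)}{s^2}\,ds\le\frac{M}{|\log\delta|}.
\end{equation*}

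With these in hand the constructions are routine. For part~(1) I take $\phi_\alpha:=\log(-u_\alpha)$ near $A$; since $|d\phi_\alpha|^2=|\nabla u_\alpha|^2/u_\alpha^2$, the second estimate gives $d\phi_\alpha\in L^2$ locally, and gluing the $\phi_\alpha$ by a partition of unity to a standard smooth exhaustion of $\Omega$ produces an exhaustion function of $\Omega\setminus A$ with $d\phi\in L^{2,loc}(\Omega)$. For part~(2) I set $\chi^{(\alpha)}_\delta:=\chi(u_\alpha/\log\delta)$, which equals $1$ where $\rho_\alpha\ge\delta$ and $0$ where $\rho_\alpha\le\delta^2$; then $|d\chi^{(\alpha)}_\delta|\le C|\nabla u_\alpha|/|\log\delta|$ on the shell $\{\delta^2<\rho_\alpha<\delta\}$, so the first estimate gives $\|d\chi^{(\alpha)}_\delta\|_{L^2}^2\le CM/|\log\delta|\to0$. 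Taking $\psi_\delta:=\prod_\alpha\chi^{(\alpha)}_\delta$ (each factor extended by $1$ outside its chart) yields a smooth function that vanishes on a neighbourhood of $A$, equals $1$ on $K$ once $\delta$ is small enough that $K$ is disjoint from every $\{\rho_\alpha<\delta\}$, and satisfies $|d\psi_\delta|\le\sum_\alpha|d\chi^{(\alpha)}_\delta|$, hence $\|d\psi_\delta\|_{L^2(\Omega)}<\epsilon$ for $\delta$ small.

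The main obstacle is the uniform flux bound $\Phi\le M$, and inside it the finiteness of the mass of $dd^c\log\rho_\alpha$ up to and across the singular set. This is exactly where the codimension of $\Sing X$ enters: the level sets $\{u=s\}$ remain in $X_{\mathrm{reg}}$ for every finite $s$, so the divergence theorem never meets the singularities directly, yet one must still rule out that mass of the positive current $dd^c u$ escapes as $s\to-\infty$, i.e.\ that the defining cycle has finite volume near $A$. I would supply this through the Lelong volume bounds for analytic sets, or, more abstractly, through the Skoda--El Mir extension of closed positive currents across the pluripolar set $A$. The remaining points---the comparison of $h$ with the flat metric, the smoothing of $\chi$, and the partition-of-unity gluing---are standard.
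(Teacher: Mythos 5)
Your strategy is genuinely different from the paper's: you aim for a potential-theoretic flux estimate for $u_\alpha=\log\rho_\alpha$ via the divergence theorem and the coarea formula, whereas the paper (Lemma \ref{lem:gradient}) obtains the same target estimate $\int|\nabla u_\alpha|^2/u_\alpha^2<\infty$ by pulling everything back to a resolution of singularities on which $|f|^2$ becomes a monomial times a unit and the degenerate volume form cancels against $|dz_\mu|^2_\gamma$ pointwise (Lemma \ref{lem:metric1}). Both routes lead to the same $\log\log$ exhaustion and to logarithmic cut-offs. However, your argument has a genuine gap exactly at the step you identify as the heart of the matter. The monotonicity $\Phi(s_2)-\Phi(s_1)=\int_{\{s_1<u<s_2\}}\Delta u\,dV\ge 0$ needs the shell $\{s_1<u<s_2\}$ to be relatively compact in the chart $U_\alpha$, and you arrange this by choosing $s_0$ with $\{\rho_\alpha<e^{s_0}\}\subset\subset U_\alpha$. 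Such an $s_0$ exists only when $A\cap U_\alpha$ is compactly contained in $U_\alpha$, i.e.\ essentially only for isolated points of $A$. Whenever $\dim A\ge 1$, the set $A$ exits through $\partial U_\alpha$, every sublevel set $\{\rho_\alpha<\delta\}$ accumulates on $\partial U_\alpha\cap A$, the level sets $\{u_\alpha=s\}$ are non-compact in $U_\alpha$ (so even the finiteness of $\Phi(s_0)$ is not immediate), and the divergence theorem acquires an uncontrolled term $\int_{\partial U_\alpha\cap\{s_1<u<s_2\}}\partial_n u\,d\mathcal{H}^{2n-1}$, which ruins both the monotonicity and the uniform bound $\Phi\le M$. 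Since both displayed estimates, and hence both parts of your proof, rest on that bound, this must be repaired: either insert a spatial cut-off $\theta\in C^\infty_{cpt}(U_\alpha)$ and integrate $\theta^2\,\nabla u\cdot\nabla\bigl(-1/\max(u,s)\bigr)$ by parts (the term $\int\theta^2u^{-1}\Delta u$ has a favorable sign and the error is controlled by $\int|\nabla\theta|^2$), or verify the integrability on a resolution as the paper does, where no compactness of level sets is ever needed.

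A second, smaller defect concerns the assembly in part (2): the product $\psi_\delta=\prod_\alpha\chi^{(\alpha)}_\delta$ with each factor ``extended by $1$ outside its chart'' is not smooth, because $\chi^{(\alpha)}_\delta<1$ on $\{\rho_\alpha<\delta\}$ and this set reaches $\partial U_\alpha$ wherever $A$ does, so the extension by $1$ jumps across $\partial U_\alpha$ near $A$. The paper sidesteps this by first gluing the local $\log\log$ functions with a partition of unity into a single global exhaustion $\phi_U$ with $d\phi_U\in L^2(\overline{\Omega})$, and then truncating it by $f_k\circ\phi_U$; the norm $\|f_k'(\phi_U)\,d\phi_U\|_{L^2}$ tends to $0$ because the supports of $f_k'(\phi_U)$ shrink to a null set. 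Adopting that order of operations (glue first, then cut off by composing with a function of the exhaustion) would also fix your construction.
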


\smallskip
We approach the question about parabolicity by means of condition (1) and (2) in Definition \ref{defn:parabolic},
because such exhaustion/cut-off procedures can be obtained by complex geometric techniques,
and because they play a crucial role in complex analysis on singular complex spaces
(see e.g. Theorem \ref{thm:bounded} below).

We will prove Theorem \ref{thm:parabolic} in Section \ref{sec:main1}.
Of particular interest is the case when $X$ is compact. Then $X$ can be given any Hermitian metric
as all such metrics on $X$ are equivalent, and we obtain:

\begin{cor}\label{cor:parabolic}
Let $X$ be a compact reduced complex space, e.g. a projective variety. Then the regular locus, $X\setminus \Sing X$, is parabolic.
\end{cor}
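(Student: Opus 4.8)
The plan is to deduce the corollary directly from part (1) of Theorem \ref{thm:parabolic}, exploiting that compactness of $X$ turns the local statement at the singular set into a global one. First I would note that since $X$ is compact, any two Hermitian metrics on $X$ are mutually equivalent, so the notion of parabolicity of $M := X\setminus\Sing X$ does not depend on the choice of metric $h$; we may therefore fix an arbitrary Hermitian metric, as remarked in the text preceding the corollary. Next, the singular locus $\Sing X$ of a reduced complex space is a nowhere-dense analytic subset of $X$, hence thin and analytic, so it is admissible as the set $A$ in Theorem \ref{thm:parabolic}.

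The key step is to apply Theorem \ref{thm:parabolic}(1) with $\Omega = X$ and $A = \Sing X$. This produces a smooth exhaustion function $\phi\in C^\infty(X\setminus\Sing X)=C^\infty(M)$ of $M$ with $d\phi\in L^{2,loc}(X)$, i.e. $\|d\phi\|_{L^2(K)}<\infty$ for every compact $K\subset X$. Because $X$ is compact, I may take $K=X$ itself, which yields $\|d\phi\|_{L^2(X)}<\infty$. By the definition of the $L^2$-norm on $X$, this integral is taken over the regular part, so
\begin{eqnarray*}
\|d\phi\|_{L^2(M)}^2 &=& \int_{X\setminus\Sing X}|d\phi|_h^2\,dV_X \;=\; \|d\phi\|_{L^2(X)}^2 \;<\;\infty.
\end{eqnarray*}
Thus $\phi$ is a smooth exhaustion function of $M$ with $L^2$-integrable gradient, which is exactly condition (1) of Definition \ref{defn:parabolic}, and hence $M=X\setminus\Sing X$ is parabolic.

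There is no genuine obstacle here: all of the difficulty has been absorbed into Theorem \ref{thm:parabolic}, and the corollary is a clean specialization. The only point I would stress is \emph{why} no extra contribution to the gradient energy appears. Since $X$ is compact, the sole ends of the regular locus $M$ are the approaches to $\Sing X$, and these are precisely the ends along which $\phi$ blows up and against which Theorem \ref{thm:parabolic} controls $\|d\phi\|_{L^2}$; compactness of $X$ leaves no region ``at infinity'' that could force additional gradient energy, so taking $K=X$ is legitimate and the local bound becomes a global one.
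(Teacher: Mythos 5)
Your proof is correct and follows exactly the route the paper intends: the paper derives the corollary from Theorem \ref{thm:parabolic} applied with $\Omega=X$ and $A=\Sing X$, using compactness to pass from the local bound $d\phi\in L^{2,loc}(X)$ to the global bound $\|d\phi\|_{L^2(M)}<\infty$ and invoking condition (1) of Definition \ref{defn:parabolic}, after noting that all Hermitian metrics on a compact $X$ are equivalent. Your added remark on why no gradient energy can accumulate ``at infinity'' is a correct and harmless elaboration.
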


\smallskip
As an interesting consequence, all subharmonic functions on $X\setminus \Sing X$ that are bounded from above
must be constant.

Though Theorem \ref{thm:parabolic} and Corollary \ref{cor:parabolic} give nice examples of parabolic manifolds,
we are not aware of this statement from the literature. So, these notes may turn out useful as a reference.

\bigskip
Theorem \ref{thm:parabolic} has an interesting application to the $L^2$-theory for the $\dq$-operator on singular complex spaces.
Here, one considers $L^2$-forms on the regular locus of a Hermitian complex space $X$.
Due to the incompleteness of the metric on $X\setminus\Sing X$, there exist different closed $L^2$-extensions of the
$\dq$-operator on smooth forms with compact support on $X\setminus \Sing X$.
The two most important are the maximal and the minimal closed $L^2$-extension.

The maximal closed extension is the $\dq$-operator in the sense of distributions which we denote by $\dq_w$.
A differential form $f\in L^{2,loc}(\Omega)$ on an open set $\Omega\subset X$ is in the domain of $\dq_w$
if there exists a form $g\in L^{2,loc}(\Omega)$ such that $\dq f=g$ in the sense of distributions on $\Omega\setminus\Sing X$,
and we write $\dq_w f=g$ for that.

The minimal closed extension, denoted by $\dq_s$, is defined as follows. Let $f\in L^{2,loc}(\Omega)$ be in the domain of $\dq_w$.
Then we say that $f$ is in the domain of $\dq_s$ (and we set $\dq_s f:=\dq_w f$)
if there exists a sequence $\{f_j\}_j$ in the domain of $\dq_w$ such that
$$\supp f_j\cap \Sing X=\emptyset \ \ \ \forall j,$$
i.e., the $f_j$ have support away from the singular set, and
\begin{eqnarray}\label{eq:dqs1}
f_j &\rightarrow& f,\\
\dq_w f_j &\rightarrow& \dq_w f\label{eq:dqs2}
\end{eqnarray}
for $j\rightarrow \infty$ in $L^{2}(K)$ on each compact set $K\subset \Omega$.

So, the $\dq_s$-operator comes with a certain Dirichlet boundary condition (respectively growth condition) at the singular set of $X$.
It plays an important role in several studies of the $\dq$-operator on singular complex spaces (see \cite{PS}, \cite{OV}, \cite{R1}, \cite{R2}),
but it is very difficult to actually understand the domain of the $\dq_s$-operator.
Here now, we use the fact that the regular locus of $X$ is locally parabolic at singular points to deduce:

\begin{thm}\label{thm:bounded}
Bounded forms in the domain of the $\dq_w$-operator are also in the domain of the $\dq_s$-operator.
\end{thm}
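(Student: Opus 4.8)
The plan is to realize a bounded form $f\in\Dom(\dq_w)$ as a limit of forms supported away from $\Sing X$ by multiplying $f$ with the small-gradient cut-off functions provided by Theorem \ref{thm:parabolic}(2). Write $g:=\dq_w f\in L^{2,loc}(\Omega)$ and set $A:=\Sing X$. Since $\Omega$ need not be relatively compact, I first fix an exhaustion $\Omega_1\Subset\Omega_2\Subset\cdots$ of $\Omega$ by relatively compact open subsets with $\bigcup_\nu\Omega_\nu=\Omega$, and choose compact sets $L_\nu:=\{x\in\o{\Omega_\nu}:\dist(x,A)\ge 1/\nu\}\subset\Omega_{\nu+1}\setminus A$, which exhaust $\Omega\setminus A=\Omega\setminus\Sing X$. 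For each $\nu$, I apply Theorem \ref{thm:parabolic}(2) on the relatively compact set $\Omega_{\nu+1}$ to the compact set $L_\nu$ and $\epsilon=1/\nu$: this yields $\psi_\nu\in C^\infty(\Omega_{\nu+1})$ with $0\le\psi_\nu\le1$, $\supp\psi_\nu\cap A=\emptyset$, $\psi_\nu\equiv1$ on a neighborhood of $L_\nu$, and $\|d\psi_\nu\|_{L^2(\Omega_{\nu+1})}<1/\nu$.

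To obtain objects defined on all of $\Omega$, I fix in addition ordinary cut-offs $\chi_\nu\in C^\infty_{cpt}(\Omega_{\nu+1})$ with $0\le\chi_\nu\le1$ and $\chi_\nu\equiv1$ on $\o{\Omega_\nu}$; these may be chosen smooth across $A$, as they play no role near the singularities. I then define
\begin{eqnarray*}
f_\nu &:=& \chi_\nu\,\psi_\nu\,f,
\end{eqnarray*}
extended by zero to $\Omega$. Then $\supp f_\nu\subset\supp\psi_\nu$ is disjoint from $\Sing X$, and by the Leibniz rule
\begin{eqnarray*}
\dq_w f_\nu &=& \dq(\chi_\nu\psi_\nu)\wedge f+\chi_\nu\psi_\nu\,g\ \in\ L^{2,loc}(\Omega),
\end{eqnarray*}
so each $f_\nu$ lies in $\Dom(\dq_w)$ with support away from the singular set, as required for the approximation defining $\dq_s$.

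It remains to verify the two convergences on an arbitrary compact set $K\subset\Omega$. Fix such a $K$; for $\nu$ large enough we have $K\subset\Omega_\nu$, hence $\chi_\nu\equiv1$ and $d\chi_\nu\equiv0$ on $K$, while every point of $K\setminus A$ lies in $L_\nu$ once $\nu$ is large, so that $\psi_\nu\to1$ pointwise on $K\setminus A$. Since $0\le\chi_\nu\psi_\nu\le1$ and $\chi_\nu\psi_\nu\to1$ pointwise a.e. on $\Omega\setminus\Sing X$, dominated convergence (with integrable dominating functions $|f|^2$ and $|g|^2$) gives $\|f-f_\nu\|_{L^2(K)}\to0$ and $\|(\chi_\nu\psi_\nu-1)g\|_{L^2(K)}\to0$. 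For the remaining contribution to $\dq_w f_\nu-\dq_w f$ I split $\dq(\chi_\nu\psi_\nu)=\chi_\nu\dq\psi_\nu+\psi_\nu\dq\chi_\nu$: on $K$ the second summand vanishes for $\nu$ large (since $d\chi_\nu\equiv0$ there), while for the first I use $0\le\chi_\nu\le1$ together with the boundedness of $f$ to estimate
\begin{eqnarray*}
\|\chi_\nu\,\dq\psi_\nu\wedge f\|_{L^2(K)} &\le& C\,\|f\|_{L^\infty}\,\|d\psi_\nu\|_{L^2(\Omega_{\nu+1})}\ <\ C\,\frac{\|f\|_{L^\infty}}{\nu}\ \longrightarrow\ 0,
\end{eqnarray*}
where $C$ depends only on the degree of $f$. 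Hence $\dq_w f_\nu\to\dq_w f$ in $L^2(K)$ as well, and therefore $f\in\Dom(\dq_s)$.

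The crux of the argument, and the point where both hypotheses genuinely enter, is this last estimate: the term $\dq\psi_\nu\wedge f$ is exactly the error produced by cutting out the singular set, and it is controlled only because the cut-offs have arbitrarily small $L^2$-gradient (this is Theorem \ref{thm:parabolic}(2), i.e. local parabolicity at $\Sing X$) \emph{and} because $f$ is bounded, so that the pointwise size of $f$ near $\Sing X$ cannot overcome the smallness of $\|d\psi_\nu\|_{L^2}$. I expect the only real bookkeeping to be the reduction to the relatively compact setting needed to invoke Theorem \ref{thm:parabolic}(2); this is handled above by the auxiliary cut-offs $\chi_\nu$, whose gradients are harmless since they are supported far out in the exhaustion and vanish on any fixed compact set for large $\nu$.
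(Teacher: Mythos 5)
Your proof is correct and uses the same key mechanism as the paper's: multiply the bounded form by the small-gradient cut-offs supplied by Theorem \ref{thm:parabolic}(2) and control the only dangerous term via $\|\dq\psi_\nu\wedge f\|_{L^2}\lesssim\|f\|_{L^\infty}\|d\psi_\nu\|_{L^2}<\|f\|_{L^\infty}/\nu$. The sole difference is organizational: the paper reduces to a relatively compact local model by invoking the locality of the problem (citing \cite{R1}, Section 6.1) and takes $K_j=\o{V}\setminus\{|f|<1/j\}$, whereas you globalize directly with an exhaustion and auxiliary cut-offs $\chi_\nu$, which is a correct and self-contained variant of the same argument.
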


This is interesting e.g. in the following context. If $X$ a $q$-complete Hermitian complex space of dimension $n$,
then $H^{0,n-q}_{\dq_s,cpt}(X)=0$ for $0 < q\leq n$, i.e. the $\dq_s$-equation with compact support is solvable for $(0,n-q)$-forms.
If $X$ has only rational singularities, then also $H^{0,q}_{\dq_s}(X)=0$ for $q>0$, i.e., the $\dq_s$-equation is solvable for $(0,q)$-forms
(see \cite{R2}, Theorem 1.3 and Theorem 1.6).
But Theorem \ref{thm:bounded} yields that bounded $\dq_w$-closed forms are also $\dq_s$-closed 
and so corresponding $\dq_s$-equations are solvable for bounded forms in the situations mentioned.

\medskip
After this note was accepted for publication,
N. Sibony pointed out
that Theorem \ref{thm:parabolic} is a special case of results that appeared in \cite{S}.
In fact, in \cite{S} it is shown that it is possible to cut out complete pluripolar sets
from positive closed $(p,p)$-currents with locally finite mass such that the $L^2$-norm of the derivatives of the cut-off functions is uniformly bounded.
The cut-off functions constructed in \cite{S}, Lemma 1.2, actually have the desired properties
(cf. the proof of \cite{S}, Theorem 1.1).

\medskip
{\bf Acknowledgements.} 
This research was supported by the Deutsche Forschungsgemeinschaft (DFG, German Research Foundation), 
grant RU 1474/2 within DFG's Emmy Noether Programme.
The author thanks Robert Berman for drawing his attention to the question of parabolicity of the regular locus of
complex varieties and for interesting discussions on the topic.
He also thanks an unknown referee for proposing to use a $\log\log$-exhaustion in Lemma \ref{lem:gradient}
which improved the proof of Theorem \ref{thm:parabolic} considerably,
and moreover Nessim Sibony for explaining how the results are contained in his more general theory.

\smallskip
\section{Resolution of singularities}\label{sec:resolution}

Let $X$ be a Hermitian complex space of pure dimension $n$ and $\pi: M\rightarrow X$ a resolution of singularities.
Let $h$ be the Hermitian metric on $X$ and $\gamma := \pi^* h$ its pull-back to $M$.
Then $\gamma$ is positive semidefinite (a pseudo-metric).
Let $M$ carry any (positive definite) metric $\sigma$. It follows that $\sigma \gtrsim \gamma$ on compact sets.

Let $dV_X$ be the volume form with respect to $h$ on $X$. Then $\pi^* dV_X = dV_\gamma$,
where $dV_\gamma$ is the volume form with respect to $\gamma$ on $M$.

\medskip
We consider a local coordinate patch in $M$ where we have coordinates $z_1, ..., z_n$.
Here, we can assume that $\sigma$ is just the Euclidean metric, i.e.,
$\langle \frac{\partial}{\partial z_j} , \frac{\partial}{\partial z_k} \rangle_\sigma = \delta_{jk}$.
Locally, $X$ has a holomorphic embedding $\iota: X \hookrightarrow \C^N$ into complex number space
such that $h$ is the pull-back of a regular Hermitian metric from $\C^N$ to $X$.
We can assume that $h$ is just the pull-back of the Euclidean metric, i.e., $h=\iota^* \langle\cdot,\cdot\rangle_{\C^N}$.
Choosing the coordinate patch on $M$, say $D=\{|z_j|\leq \frac{1}{2}: j=1, ...,n\}$, small enough,
we can assume that $\pi(D)$ is part of such an embedding and we can consider the resolution as a mapping
\begin{eqnarray*}
\pi=(\pi_1, ..., \pi_N): && D\subset \C^n \longrightarrow X \subset \C^N.
\end{eqnarray*}
So, $\gamma = \pi^* \langle\cdot,\cdot\rangle_{\C^N}$.
Let us describe that in other words. The Euclidean metric in $\C^N$ can expressed as $\sum_{j=1}^N dw_j \otimes d\o{w_j}$,
where $w_1, ..., w_N$ are the Euclidean coordinates.
Then
\begin{eqnarray*}
\gamma &=& \pi^* \sum_{j=1}^N dw_j \otimes d\o{w_j} = \sum_{j=1}^N d\pi_j \otimes d\o{\pi_j}.
\end{eqnarray*}
Let us express that in matrix notation. We denote by
\begin{eqnarray*}
\Jac \pi &=& \left( \frac{\partial\pi_j}{\partial z_k} \right)_{j,k}
\end{eqnarray*}
the Jacobian of $\pi$. Then
\begin{eqnarray*}
\langle v,w\rangle_\gamma &=& ^t\overline{v}\cdot\ ^t\overline{\Jac \pi}\cdot \Jac \pi\cdot w,
\end{eqnarray*}
i.e., $\gamma$ is represented (in the coordinates $z_1, ..., z_n$) by the Hermitian matrix 
\begin{eqnarray*}
H &=& ^t\o{\Jac \pi}\cdot \Jac\pi \geq 0.
\end{eqnarray*}
Let
\begin{eqnarray*}
H\ =\ \big( H_{jk}\big)_{j,k} \ \ \ ,\ \ \ H^{-1}\ =\ \big( H^{jk} \big)_{j,k}.
\end{eqnarray*}
We obtain
\begin{eqnarray}\label{eq:metric1}
\left| \frac{\partial}{\partial z_\mu}\right|^2 _\gamma = H_{\mu\mu} &,&
\left| dz_\mu\right|_\gamma^2 = H^{\mu\mu}
\end{eqnarray}
and
\begin{eqnarray}\label{eq:metric2}
\pi^* dV_X = dV_\gamma &=& \big( \det H\big) dV_{\C^n},\\
\big| dV_{\C^n} \big|_\gamma &=& \big(\det H\big)^{-1}.\label{eq:metric3}
\end{eqnarray}

\medskip
\noindent
From this we deduce easily a central lemma:

\begin{lem}\label{lem:metric1}
We have
\begin{eqnarray*}
\left|dz_\mu\right|_\gamma^2 &\lesssim& \left| dV_{\C^n}\right|_\gamma
\end{eqnarray*}
on compact sets.
\end{lem}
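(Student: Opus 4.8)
The plan is to read off both sides of the inequality from the explicit formulas already derived and then reduce the statement to a boundedness assertion about a single algebraic quantity. By \eqref{eq:metric1} we have $|dz_\mu|_\gamma^2 = H^{\mu\mu}$, the $(\mu,\mu)$-entry of $H^{-1}$, and by \eqref{eq:metric3} we have $|dV_{\C^n}|_\gamma = (\det H)^{-1}$. Hence the claim $|dz_\mu|_\gamma^2 \lesssim |dV_{\C^n}|_\gamma$ on compact sets is literally the same as $H^{\mu\mu} \lesssim (\det H)^{-1}$ on compact sets, so everything reduces to controlling the diagonal entry of the inverse matrix against the reciprocal determinant.

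First I would invoke Cramer's rule in the form $H^{-1} = (\det H)^{-1}\operatorname{adj}(H)$. The diagonal entry $H^{\mu\mu}$ then equals $(\det H)^{-1}$ times the $(\mu,\mu)$-cofactor of $H$; since that cofactor carries the sign $(-1)^{2\mu}=1$, it is simply the principal minor $m_\mu := \det(\widehat H_\mu)$, where $\widehat H_\mu$ is the $(n-1)\times(n-1)$ Hermitian matrix obtained from $H$ by deleting the $\mu$-th row and column. Thus $H^{\mu\mu} = m_\mu\,(\det H)^{-1}$, and the inequality to prove collapses to $m_\mu \lesssim 1$ on compact sets. This is now immediate: the entries $H_{jk} = \sum_{l=1}^N \overline{\partial\pi_l/\partial z_j}\,(\partial\pi_l/\partial z_k)$ are smooth functions on the coordinate patch $D$, being polynomial in the holomorphic derivatives of the components of $\pi$, so the determinant $m_\mu$ of a fixed submatrix of $H$ is itself smooth on $D$ and in particular bounded on every compact subset. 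This yields $H^{\mu\mu} \lesssim (\det H)^{-1}$ and hence the lemma.

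The only point that genuinely requires care — and the reason comparing against $(\det H)^{-1}$ is exactly the right normalization — is that $\gamma$ is only a pseudo-metric: it degenerates along the exceptional set $E$ of the resolution, where $\det H = 0$, so that both $H^{\mu\mu}$ and $(\det H)^{-1}$ individually blow up as one approaches $E$. The substance of the lemma is that they blow up at precisely the same rate, the quotient $H^{\mu\mu}/(\det H)^{-1}$ being exactly the minor $m_\mu$, which extends smoothly across $E$ and stays bounded. I expect this cancellation to be the conceptual crux rather than any computation: the adjugate algebra is routine, but recognizing that the singular factor $\det H$ in the numerator of $H^{\mu\mu}$ cancels the one in the denominator is what makes the estimate uniform up to the singular locus, on compact sets.
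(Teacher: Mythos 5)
Your proof is correct and follows essentially the same route as the paper: both write $H^{\mu\mu}=(\det H)^{-1}H^{\#}_{\mu\mu}$ via the adjugate matrix and observe that the cofactor (a principal minor of the smooth matrix $H$) is bounded on compact sets, so that $H^{\mu\mu}\lesssim(\det H)^{-1}=|dV_{\C^n}|_\gamma$. The additional remark about the cancellation of the degenerate factor $\det H$ along the exceptional set is a fair description of why the estimate is the right one, but it adds nothing beyond the paper's argument.
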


\begin{proof}
Let $H^\#$ be the adjungate matrix of $H$ such that $H^{-1}=(\det H)^{-1} H^\#$.
The entries of $H^\#$ are smooth functions on $M$.
By \eqref{eq:metric1} and \eqref{eq:metric3}, we obtain
\begin{eqnarray*}
\left|dz_\mu\right|_\gamma^2 = H^{\mu\mu} = (\det H)^{-1} H^\#_{\mu\mu} \lesssim (\det H)^{-1} = |dV_{\C^n}|_\gamma.
\end{eqnarray*}
\end{proof}

\smallskip
\section{Proof of Theorem \ref{thm:parabolic}}\label{sec:main1}

\subsection{$L^2$-Estimates for gradients of exhaustion functions}\label{sec:l2estimatescutoff}

\begin{lem}\label{lem:gradient}
Let $(X,h)$ be a Hermitian complex space and $f=(f_1, ..., f_m)$ a tuple of holomorphic functions on $X$
such that their common zero set is thin in $X$.

Let $r: \R_{\geq 0} \rightarrow [0,\frac{1}{2}]$ be a smooth function such that
$r(x)=x$ for $0\leq x\leq \frac{1}{4}$ and $r(x) = \frac{1}{2}$ for $x\geq \frac{1}{2}$. Let
\begin{eqnarray}\label{eq:loglog}
F &:=& \log | \log r\big(\sum_{k=1}^m |f_k|^2\big) |.
\end{eqnarray}

Then the $L^2$-norm of $\dq F$ is bounded on compact subsets of $X$, i.e., $\dq F \in L^{2,loc}(X)$.
The same statement holds for $\partial F$ and $d F$, respectively.
\end{lem}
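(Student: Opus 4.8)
The plan is to pull everything back to a resolution of singularities and to reduce the claim to an elementary one-variable integral, using Lemma \ref{lem:metric1} to absorb the degeneracy of the pseudo-metric $\gamma$. Fix a compact set $K\subset X$; since $dF=\partial F+\dq F$ with $|dF|^2=|\partial F|^2+|\dq F|^2$, it suffices to bound $\|\dq F\|_{L^2(K)}$, the argument for $\partial F$ being identical. I would choose the resolution $\pi\colon M\to X$ of Section \ref{sec:resolution} so that, in addition, the pulled-back ideal $(f_1\circ\pi,\dots,f_m\circ\pi)\OO_M$ is locally principal with normal crossings support; this is possible by Hironaka, and the composition is still a resolution, so Lemma \ref{lem:metric1} remains available. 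Writing $g_k:=f_k\circ\pi$ and $u:=\sum_k|g_k|^2$, one then has in suitable local coordinates $z_1,\dots,z_n$ on a chart $D\subset M$
\[
u\ =\ |z^\alpha|^2\,\lambda,\qquad \lambda:=\sum_k|h_k|^2>0\ \text{smooth},
\]
where $g_k=z^\alpha h_k$ and the $h_k$ have no common zero. Since $\pi$ is a biholomorphism off a set of measure zero and $\pi^*dV_X=dV_\gamma$ by \eqref{eq:metric2}, it is enough to bound $\int_D|\dq F|_\gamma^2\,dV_\gamma$ on each of the finitely many charts covering $\pi^{-1}(K)$.

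On the region $\{u\le 1/4\}$ we have $r(u)=u$ and $F=\log(-\log u)$ by \eqref{eq:loglog}, hence
\[
\dq F\ =\ \frac{1}{\log u}\,\dq\log u\ =\ \frac{1}{\log u}\Big(\sum_{j}\alpha_j\frac{d\o{z_j}}{\o{z_j}}+\dq\log \lambda\Big),
\]
where $\dq\log\lambda$ is smooth. Estimating the norm of each $d\o{z_j}=\o{dz_j}$ by Lemma \ref{lem:metric1}, namely $|d\o{z_j}|_\gamma^2\lesssim|dV_{\C^n}|_\gamma=(\det H)^{-1}$, and using $dV_\gamma=(\det H)\,dV_{\C^n}$, the degenerate factor $\det H$ cancels and one obtains, on compact subsets of $D$,
\[
|\dq F|_\gamma^2\,dV_\gamma\ \lesssim\ \frac{1}{(\log u)^2}\Big(\sum_j\frac{\alpha_j}{|z_j|}+C\Big)^2 dV_{\C^n}\ \lesssim\ \frac{1}{(\log u)^2}\Big(\sum_j\frac{\alpha_j^2}{|z_j|^2}+C^2\Big) dV_{\C^n}.
\]

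Since $\lambda$ is bounded away from $0$ and $\infty$ on compact sets, $|\log u|\gtrsim\sum_l\alpha_l|\log|z_l||\ge\alpha_j|\log|z_j||$ near the divisor, so the decisive cancellation is term by term:
\[
\frac{1}{(\log u)^2}\,\frac{\alpha_j^2}{|z_j|^2}\ \lesssim\ \frac{1}{|z_j|^2\,(\log|z_j|)^2}.
\]
Integrating in $z_j$ over a disc and applying Fubini over the remaining (bounded) variables reduces each term to the one-variable integral $\int_0^{1/2}\rho^{-1}(\log\rho)^{-2}\,d\rho<\infty$; this is exactly the place where the $\log\log$-exhaustion is needed. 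The constant term is harmless because $|\log u|\ge\log 4$ on $\{u\le1/4\}$, and on the transition region $\{u\ge1/4\}$ the form $\dq F$ is smooth with support in the relatively compact chart, hence square-integrable. Summing over charts yields $\dq F\in L^{2,loc}(X)$; replacing $d\o{z_j}$ by $dz_j$ gives the same bound for $\partial F$, and therefore for $dF$.

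I expect the main obstacle to be the finiteness of the reduced Euclidean integral $\int_D|\partial g|^2\,u^{-1}(\log u)^{-2}\,dV_{\C^n}$. The crude estimate $|\dq u|\lesssim u^{1/2}$, i.e. replacing the numerator by a constant, leads to $\int_D u^{-1}(\log u)^{-2}\,dV_{\C^n}$, which already diverges along strata of $\{f=0\}$ of codimension $\ge 2$ (for instance at the origin for $u=|z_1z_2|^2$). The resolution is therefore essential: it exhibits $\dq\log u$ as a sum of simple poles $\alpha_j\,d\o{z_j}/\o{z_j}$ that match, term by term, the summands of $\log u$, so that the extra vanishing of $\partial g$ along the deep strata is encoded in the geometry and the entire estimate collapses to the one-dimensional computation.
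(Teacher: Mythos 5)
Your proposal is correct and follows essentially the same route as the paper's proof: pull back under a resolution that monomializes $\sum_k|f_k|^2$, use Lemma \ref{lem:metric1} so that the degenerate factor $\det H$ in $dV_\gamma=(\det H)\,dV_{\C^n}$ cancels against $|d\o{z_j}|_\gamma^2\lesssim(\det H)^{-1}$, match each pole $d\o{z_j}/\o{z_j}$ with the corresponding $\log|z_j|$ term, and reduce to $\int_0^{1/2}\rho^{-1}(\log\rho)^{-2}\,d\rho<\infty$. Your closing remark on why the naive estimate $|\dq u|\lesssim u^{1/2}$ fails is a useful clarification of why the resolution step is indispensable, but it does not change the argument.
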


\begin{proof}
Let $|f|^2:= \sum |f_k|^2$, and
let $\pi: M \rightarrow X$ be a resolution of singularities so that $\pi^* |f|^2$ becomes monomial in the following sense:
$M$ can be covered by patches $D=\{|z_j| \leq \frac{1}{2}: j=1, ..., n\}$ as in Section \ref{sec:resolution}
such that \eqref{eq:metric1} -- \eqref{eq:metric3} and Lemma \ref{lem:metric1} hold and
there is a non-vanishing smooth function $g$ on $D$ such that
\begin{eqnarray}\label{eq:pif}
\pi^* |f|^2 (z) &=& |z_1|^{2k_1} \cdots |z_d|^{2k_d}\cdot g(z)
\end{eqnarray}
with coefficients $k_1, ..., k_d\geq 1$ (this is possible because $|f|^2=\sum |f_j|^2$, where the $f_j$ are holomorphic).

It is enough to consider the $L^2$-norm of $\dq F$ on a compact set $\pi(D)$:
\begin{eqnarray*}
 \int_{\pi(D)\setminus\Sing X} \big| \dq F \big|^2_h dV_X \ =\
 \int_{D}  \big| \dq \pi^* F \big|^2_\gamma \pi^* dV_X
 \ =\ \int_{D} \big| \dq \pi^* F \big|^2_\gamma (\det A) dV_{\C^n}
\end{eqnarray*}
for such a patch $D$ (using \eqref{eq:metric2}).

By using $\pi^* F= \log|\log r( \pi^* |f|^2)|$, the representation \eqref{eq:pif} and Lemma \ref{lem:metric1}, we obtain:
\begin{eqnarray*}
\big| \dq \pi^* F \big|^2_\gamma &\lesssim& \frac{1}{\log^2 \big|r( \pi^*|f|^2)\big|} \cdot \frac{1}{\pi^* |f|^2} \cdot
\left| \sum_{j=1}^n \frac{\partial \pi^* |f|^2}{\partial \o{z_j}} d\o{z_j} \right|^2_\gamma\\
&\lesssim& \frac{1}{\log^2 \big|r( \pi^*|f|^2)\big|} \cdot \sum_{j=1}^n \frac{|d\o{z_j}|_\gamma^2}{|z_j|^2}\\
&\lesssim& \sum_{j=1}^n \frac{ |dV_{\C^n}|_\gamma}{|z_j|^2 \cdot \log^2 \big|r( \pi^*|f|^2)\big|}
\end{eqnarray*}
Taking into account that it is enough to estimate one summand (say for $j=1$),
and the fact that $\log^2 |r(\pi^* |f|^2)| \gtrsim \log^2 |z_1|^{2k_1}$ by use of \eqref{eq:pif},
the integral under consideration reduces to
\begin{eqnarray*}
  \int_{D} \frac{|dV_{\C^n}|_\gamma}{|z_1|^2 \log^2 |z_1|}  (\det A) dV_{\C^n} 
&=& \int_{D} \frac{dV_{\C^n}}{|z_1|^2 \log^2 |z_1|}  \ \lesssim\ 1,
\end{eqnarray*}
by use of \eqref{eq:metric3} in the first step. That proves the claim.

The same argument holds for $\partial$ or $d$ in place of $\dq$.
\end{proof}

\smallskip
\subsection{Proof of Theorem \ref{thm:parabolic}, statement (1)}

Let $X$ be a Hermitian complex space and $\Omega\subset X$ an open subset.
We first assume that there exists a tuple of holomorphic functions $f_1, ..., f_m \in\OO(\Omega)$
cutting out the thin analytic set $A$, i.e.,
\begin{eqnarray*}
A &=& \{f_1=...=f_m=0\}.
\end{eqnarray*}
Let
\begin{eqnarray}\label{defn:phiO}
\phi_\Omega &:=& \log | \log r\big( \sum_{j=1}^m |f_j|^2 \big) |
\end{eqnarray}
be the function defined as in Lemma \ref{lem:gradient}, \eqref{eq:loglog}.

Furthermore, let $\widehat\phi \in C^\infty(\Omega)$ be a smooth exhaustion function of $\Omega$.
Then
\begin{eqnarray*}
\phi &:=& \phi_\Omega + \widehat{\phi} \in C^\infty(\Omega\setminus A)
\end{eqnarray*}
is the desired exhaustion function by use of Lemma \ref{lem:gradient} and because $|d\widehat{\phi}|$ is locally bounded on $\Omega$.

\medskip
For the general case, let $\Omega\subset X$ be an arbitrary open set.
As $X$ is paracompact, we can cover $\o{\Omega}$ by a locally finite cover $\{\Omega_\nu\}_\nu$ of open sets $\Omega_\nu$
as above, and define exhaustion functions $\phi_{\Omega_\nu}$ as in \eqref{defn:phiO}.
Let $\psi_\nu$ be a partition of unity subordinate to $\{\Omega_\nu\}_\nu$.
Then
\begin{eqnarray}\label{defn:phiO2}
\phi_\Omega &:=& \sum_{\nu} \chi_\nu \phi_{\Omega_\nu} \ \in C^\infty(\Omega\setminus A)
\end{eqnarray}
satisfies $d\phi_\Omega \in L^{2,loc}(\Omega)$ and $\phi_\Omega(z) \rightarrow + \infty$ for $z\rightarrow z_0\in \Sing X$.
As above, $\phi_\Omega + \widehat{\phi}$ is the desired exhaustion function.

\medskip
\subsection{Proof of Theorem \ref{thm:parabolic}, statement (2)}

Let $\Omega$ be relatively compact and choose an open set $U\subset\subset X$
such that $\o{\Omega}\subset U$. Let $\phi_U$ be as in \eqref{defn:phiO2}. Thus,
\begin{eqnarray}\label{eq:est12}
\big\|d\phi_U\|_{L^2(\o{\Omega})} &<& \infty.
\end{eqnarray}
For $k\geq 1$, let $f_k: \R_{\geq 0} \rightarrow [0,1]$ be smooth functions such that $f_k(x)=1$ for $x\leq k$, $f_k(x)=0$ for $x\geq k+1$
and $|f'_k|\leq 2$. Now consider
\begin{eqnarray*}
\phi_k &:=& f_k\circ \phi_U \ \in C^\infty(U) \subset C^\infty(\Omega).
\end{eqnarray*}
Then $0\leq \phi_k \leq 1$, $\supp \phi_k\cap A=\emptyset$ and $\phi_k\equiv 1$ on a neighborhood of $K$
if $k$ is large enough. Moreover, we have
\begin{eqnarray*}
\big\| d\phi_k\|_{L^2(\o{\Omega})} &=& \| f_k'(\phi_U) d\phi_U \|_{L^2(\o{\Omega})} \leq 2 \|d\phi_U\|_{L^2(\o{\Omega})} < \infty.
\end{eqnarray*}
But the Lebesgue measure of $\o{\Omega} \cap \supp f_k'(\phi_U)$ vanishes for $k\rightarrow \infty$ and so
we obtain 
\begin{eqnarray*}
\big\| d\phi_k\|_{L^2(\o{\Omega})} &=& \| f_k'(\phi_U) d\phi_U \|_{L^2(\o{\Omega})} < \epsilon
\end{eqnarray*}
for $k$ large enough (see \cite{Alt}, A.1.16.2).
That proves the second part of Theorem \ref{thm:parabolic}.

\medskip
\section{Proof of Theorem \ref{thm:bounded}}\label{sec:main2}

Let $X$ be a Hermitian complex space,
$\Omega\subset X$ an open set and $\alpha\in L^{\infty}(\Omega)$ in the domain of the $\dq_w$-operator.
The question is local (see \cite{R1}, Section 6.1). Hence, 
we can assume that $\Omega$ is relatively compact in $X$ 
and that the singular set of $X$ is cut out in $\Omega$ by a tuple of holomorphic functions $f_1, ..., f_m \in \OO(\o{\Omega})$,
\begin{eqnarray*}
\Sing X \cap \o{\Omega} &=& \{f_1 = ...= f_m=0\}.
\end{eqnarray*}
By locality of the problem, it is moreover enough to show that $\alpha$ is in the domain of $\dq_s$ on small open
sets $V\subset\subset \Omega$, relatively compact in $\Omega$.

Let 
$$K_j:= \o{V} \setminus \{ |f| < 1/j\}$$
and choose by use of Theorem \ref{thm:parabolic}, (2), appropriate cut-off functions $\phi_j \in C^\infty(\Omega)$,
$0\leq \phi_j \leq 1$, such that $\supp \phi_j\cap \Sing X = \emptyset$,
$\phi_j \equiv 1$ in a neighborhood of $K_j$ and $\|d\phi_j\|_{L^2(\Omega)} < 1/j$.
Then
\begin{eqnarray*}
\alpha_j &:=& \phi_j \alpha
\end{eqnarray*}
is the required sequence (see \eqref{eq:dqs1} and \eqref{eq:dqs2}).

Note that
\begin{eqnarray*}
\dq_w \alpha_j &=& \phi_j  \dq_w \alpha + \dq \phi_j \wedge \alpha,
\end{eqnarray*}
where we can use the usual $\dq$ for the smooth cut-off functions $\phi_j$.

It is easy to see that $\alpha_j \rightarrow \alpha$ and $\phi_j \dq_w \alpha \rightarrow \dq_w \alpha$ in $L^2(K)$
for any compact set $K\subset V \subset\subset \Omega$ (by Lebesgue's theorem on dominated convergence).
Moreover, we have choosen the cut-off functions $\phi_j$ so that
\begin{eqnarray*}
\| \dq\phi_j \wedge \alpha\|_{L^2(K)} &\leq& \|\alpha\|_\infty \|d\phi_j\|_{L^2(K)} \leq \|\alpha\|_\infty / j \rightarrow 0
\end{eqnarray*}
for $j\rightarrow \infty$. That shows that $\alpha$ is actually in the domain of the $\dq_s$-operator.

\bigskip

\end{document}